\documentclass[12pt]{article} 

\PassOptionsToPackage{hyphens}{url}\usepackage{hyperref}
\usepackage{amsthm,amsfonts,amsmath,amssymb,graphicx,enumerate,authblk,cleveref,listings}

\theoremstyle{theorem}
\newtheorem{theorem}{Theorem}[section]

\newtheorem{lemma}[theorem]{Lemma}
\newtheorem{proposition}[theorem]{Proposition}

\theoremstyle{definition}

\usepackage[colorinlistoftodos]{todonotes}
\begin{document}

\title{Crystallizations of generalized lens spaces}

\author{Basudeb Datta}

\affil{\small Department of Mathematics, Indian Institute of Science, Bangalore 560\,012, India. \newline Institute for Advancing Intelligence, TCG CREST, Kolkata 700\,091, India. \newline Emails: {\it dattab@iisc.ac.in, bdatta17@gmail.com}.}
\date{March 22, 2026}

\maketitle

%\vspace{-6mm}
\vspace{-9mm}

\begin{abstract}
We present some natural crystallizations of the generalized lens spaces $L(p, q_1, \dots, q_n)$ for integers $p\geq 2$, $n\geq 1$ and integers $q_1, \dots, q_n$ relatively prime to $p$. These crystallizations are quotients of triangulations of the sphere $S^{2n+1}$. 
\end{abstract}

\noindent
\textbf{MSC 2020: }
57Q15; % triangulating manifolds
57Q05; %General Topology of complexes
05C15; %Coloring of Graphs and hypergraphs
05E45; %Combinatorial aspects of simplicial complexes
06A06. % Partial Order

\smallskip
\noindent
\textbf{Keywords:} Lens spaces; generalized lens spaces; simplicial cell complexes; CW-complexes; crystallizations; colored graphs. 
\section{Introduction}

A simplicial cell structure $K$ of a $d$-dimensional manifold $M$ contains at least $d+1$ vertices. If $K$ has exactly $d+1$ vertices and $K$ induces a pl structure on $M$ then $K$ is called a {\em crystallization} of $M$. In that case, the dual graph $\Lambda(K)$  has a natural proper edge coloring $j$ with $d+1$ colors. In the literatures, the $(d+1)$-regular colored graph $(\Lambda(K), j)$ is called a crystallization of $M$. In \cite{Pe74}, Pezzana showed existence of crystallization for every closed connected PL manifold.  For $p  \equiv \pm 1$ (mod $q$), crystallization of the lens space $L(p, q)$ was constructed in \cite{BD14}. In Section \ref{sec:construction}, we construct  crystallizations of the generalized lens spaces $L(p, q_1, \dots, q_n)$ for integers $p\geq 2$, $n\geq 1$ and integers $q_1, \dots, q_n$ relatively prime to $p$. The derived subdivisions of these crystallizations give us triangulations of all generalised lens spaces. These crystallizations are more natural than any other simplicial cell structures or triangulations of (generalised) lens spaces. 
These crystallizations are quotient of triangulations of the sphere $S^{2n+1}$. The case where $p=2$ is known. Recently, it is shown in \cite{AB26} that $L(2, 1, \dots, 1)$ is the unique facet minimal crystallization of $\mathbb{RP}^{2n+1}$. 

%\newpage

\section{Preliminaries}

By a poset, we mean  a finite set $X$ with a partial ordering $\leq$.  By $\alpha<\beta$, we mean $\alpha\leq \beta$ and $\alpha\neq \beta$. A bijection $f: (K, \leq) \to (L, \leq)$ between two posets is called an {\em isomorphism} if $a\leq b$ if and only if $f(a) \leq f(b)$ for $a, b\in K$. In such a case, we say $(K, \leq)$ and $(L, \leq)$  are {\em isomorphic}. An isomorphism $f : (K, \leq) \to (K, \leq)$ is called an {\em automorphism} of $(K, \leq)$. The set $\operatorname{Aut}(K)$ of all the automorphisms of $(K, \leq)$ is a group under composition and is called the {\em automorphism group} of $(K, \leq)$. Any subgroup of $\operatorname{Aut}(K)$ is called a {\em group of automorphisms} of $K$. 

A CW-complex is said to be {\em regular} if all of its closed cells are homeomorphic to closed balls. Given a CW-complex $M$, let ${\mathcal M}$ be the set of all closed cells of $M$ together with the empty set $\emptyset$. Then ${\mathcal M}$ is a poset, where the partial ordering is given by set inclusion. This poset ${\mathcal M}$ is said to be the {\em face poset} of $M$. If $M$ and $N$ are two finite regular CW-complexes with isomorphic face posets then $M$ and $N$ are homeomorphic. A regular CW-complex $M$ is said to be {\em simplicial}, if the {\em boundary complex} $\partial \sigma := \{\alpha\in {\mathcal M} : \alpha < \sigma\}$ of each cell $\sigma$ of $M$ is isomorphic (as a poset) to the boundary complex of a simplex.

A {\em simplicial cell complex} or {\em simplicial poset} $K= (K,\leq)$ of dimension $d$ is a poset isomorphic to the face poset ${\mathcal M}$ of a $d$-dimensional simplicial CW-complex $M$. The topological space $M$ is called the {\em geometric carrier} of $K$ and is also denoted by $|K|$. If a topological space $N$ is homeomorphic to $|K|$, then $K$ is said to be a {\em simplicial cell structure} (or {\em simplicial cell decomposition} or {\em pseudotriangulation}) {\em of $N$}.  Simplicial posets are also referred to in the literature as {\em CW posets}, {\em pseudo-simplicial complexes} or {\em pseudocomplexes} (see \cite{Bj84, DS26, Mu13} for details). For $\alpha, \beta\in K$, if $\alpha\leq\beta$, then we say that $\alpha$ is a {\em face} of $\beta$. For a $d$-dimensional simplicial poset $K$, let $f_j(K)$ denote the number of $j$-cells of $K$ for $0\leq j\leq d$. The $(d+1)$-tuple $f(K) := (f_0(K), \dots, f_d(K))$ is called the {\it face vector} or {\em $f$-vector} of $K$. 

For a cell $\alpha$ in a simplicial poset $K$, $\partial \alpha := \{\beta \in K \, : \, \beta <\alpha\}$ is a simplicial poset and is said to be the {\em boundary} of $\alpha$. If $\partial \alpha$ is isomorphic to the boundary complex of a $j$-simplex, then $\alpha$ is a called a {\em $j$-cell} of $K$. The 0-cells and 1-cells of a simplicial poset $K$ are called the {\it vertices} and {\em edges} of $K$ respectively. The set $V(K)$ of vertices of a simplicial poset $K$ is called the {\em vertex set} of $K$. Two vertices of $K$ are called {\em adjacent} in $K$ if they are faces of an edge. For two cells $\alpha\neq \beta$, the intersection $\partial\alpha \cap\partial\beta$  may not be a single cell  (and its faces). If $\partial\alpha \cap\partial\beta$ is a cell  (and its faces) for any two cells $\alpha\neq\beta$ then the simplicial post is a simplicial complex. Cells of a simplicial complex are called {\em simplices}. For a  simplex $\alpha$ in a simplicial complex $X$, $V(\alpha) := \{v\in V(X) \, : \, v\leq \alpha\}$ is called the {\em vertex set} of $\alpha$. Since a simplex $\alpha$ of a simplicial complex $X$  is uniquely determined by $V(\alpha)$, we identify $\alpha$ with $V(\alpha)$.

A $d$-dimensional simplicial cell complex is called {\em pure} if all its maximal cells are $d$-cells. In a $d$-dimensional simplicial cell complex $K$, the $d$-cells and the $(d-1)$-cells are called {\em facets} and {\em ridges} respectively. 
The {\it dual graph} $\Lambda(K)$ of a pure simplicial cell complex $K$ is the graph whose vertices are the facets of $K$ and edges are the ordered pairs $(\{\sigma_1, \sigma_2\}, \alpha)$, where $\alpha$ is a ridge and is a common face of the facets $\sigma_1$, $\sigma_2$.  Since each 1-cell of $K$ corresponds to a geometric $1$-simplex, it follows that the graph $\Lambda(K)$ has no loops. Observe, however, that $\Lambda(K)$ can have multiple edges running between the same two vertices. 

Clearly, a simplicial cell structure $K$ of a (closed) $d$-manifold $M$ contains at least $d+1$ vertices. If $K$ has exactly $d+1$ vertices and $K$ induces a pl structure on $M$ then $K$ is called a {\em crystallization} of $M$. Suppose $X$ is a crystallization of a PL $d$-manifold $N$. Let the vertex set $V(X)$ of $X$ be $\{0, 1, \dots, d\}$. For any facet $\sigma$ of $X$ and $0\leq i\leq d$, let $\sigma_i$ be  the $(d-1)$-face of $\sigma$ opposite to the vertex $i$. 
Clearly, if $\alpha$ is a ridge of $X$ then $\alpha= \beta_i= \gamma_i$ for two facets $\alpha, \beta$ and some vertex $i$. Then, $j((\{\beta, \gamma\}, \alpha))=i$ defines a proper edge coloring $j$ on $\Lambda(X)$ with $d+1$ colors. In the literatures, the $(d+1)$-regular colored graph $(\Lambda(X), j)$ is called a crystallization of $N$ (see \cite{FGG86} for more). In \cite{Pe74}, Pezzana showed that  every closed connected PL manifold has a crystallization.

Suppose $(X, \leq)$ is the face poset of the simplicial CW-complex $M$. Let $M^{\prime}$ be the derived subdivision of $M$. The face poset of $M^{\prime}$ is called the {\it derived subdivision} of $(X,\leq)$ and is denoted by $(X^{\prime}, \leq)$. Since $M^{\prime}$ is a geometric simplicial complex, it follows that $(X^{\prime}, \leq)$ is a simplicial complex. 
(Clearly, the vertices of $X'$ have a one to correspond to the non-empty cells of $X$.)
Since the topological space $M$ and $M^{\prime}$ are same, we can identify $|X^{\prime}|$ and $|X|$ for a simplicial poset $X$.

A simplicial complex $X$ is called a {\em combinatorial manifold} if the geometric carrier $|X|$ with induced pl structure is a PL manifold. In that case, we say $X$ is {\em combinatorial} (or {\em pl}) {\em triangulation} of $|X|$. Therefore, if $|Y|$ is a PL manifold for a simplicial cell complex $Y$, then $Y^{\prime}$ is a combinatorial manifold. 

Let $X$ and $Y$ be two simplicial complexes with disjoint vertex sets. Since $\emptyset\in X, Y$,  $X\ast Y := \{\alpha\cup\beta\, :\, \alpha\in X, \beta\in Y\}$ is a simplicial complex (called the {\em join} of $X$ and $Y$) containing both $X$ and $Y$. Note that the vertex set $V(X\ast Y) = V(X) \sqcup V(Y)$ and $\dim(X\ast Y) = \dim(X) + \dim(Y) +1$. If both $X$ and $Y$ are (pl) triangulations of spheres then $X\ast Y$ is also a (pl) triangulation of a sphere (cf. \cite{RS82}). 

Let $G \leq \operatorname{Aut}(X)$ be a group of automorphisms of a poset $X$. Suppose, $X$ is the face poset of a CW-complex $M$. Then $G$ has a natural action on $M$ and $M/G$ is again a CW-complex. Clearly, the quotient poset $X/G$ is isomorphic to the face poset of the CW-complex $M/G$. In general, $X/G$ need not be a simplicial cell complex for a simplicial cell complex $X$ and $G\leq \mbox{Aut}(X)$. In \cite{DS26}, the authors have defined good action and shown that the good actions on the category of simplicial cell complexes are natural actions. The action of $G\leq \mbox{Aut}(X)$ on simplicial cell complex $X$ is {\em good}, if two vertices of $X$ in the same $G$-orbit are not adjacent in $X$. And it was  shown  the following in \cite{DS26}. 

\begin{proposition} \label{prop:quotient}
Let $X$ be a simplicial cell complex, and let $G \leq \operatorname{Aut}(X)$. 
The quotient poset $X/G$ is a simplicial cell complex if and only if the action of $G$ on $X$ is good. 
\end{proposition}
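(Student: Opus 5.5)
We have to show that $X/G$ is a simplicial cell complex exactly when no two vertices of $X$ in the same $G$-orbit are adjacent. The plan is to work directly with the quotient poset. Its elements are orbits $[\alpha]=G\alpha$, ordered by $[\alpha]\leq[\beta]$ iff $g\alpha\leq\beta$ for some $g\in G$. We then compare the lower interval of $[\sigma]$ in $X/G$ with the lower interval of $\sigma$ in $X$. By definition, $X/G$ is a simplicial poset iff for every $\sigma$ the interval $\{[\alpha]:[\alpha]<[\sigma]\}$ is isomorphic to the boundary of a simplex, i.e.\ to a Boolean lattice minus its top. For regularity of the CW quotient, the natural step is to invoke the known fact that a poset with a least element $\emptyset$ and Boolean lower intervals is the face poset of a regular (simplicial) CW-complex (Bj\"orner~\cite{Bj84}). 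So it suffices to check the Boolean condition on intervals.

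\textbf{Sufficiency (good $\Rightarrow$ simplicial).} Fix a cell $\sigma$ of $X$ and consider the map $\pi_\sigma:[\emptyset,\sigma]\to[\emptyset,[\sigma]]$, $\alpha\mapsto[\alpha]$. It is order preserving and, by the definition of the quotient order, surjective: if $[\alpha]\leq[\sigma]$ then $g\alpha\leq\sigma$ for some $g$, so $[\alpha]=[g\alpha]$ lies in the image. The key step is injectivity together with order reflection. Suppose $\alpha,\beta\leq\sigma$ and $\beta=g\alpha$. Each vertex $v\leq\alpha$ has $gv\leq\beta\leq\sigma$. Any two distinct vertices of $\sigma$ are adjacent, because a cell's boundary is a simplex boundary, so every pair of its vertices spans an edge in $\partial\sigma$. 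Goodness therefore forces $gv=v$. A cell $\alpha\leq\sigma$ is determined inside the Boolean interval $[\emptyset,\sigma]$ by its vertex set, so $g\alpha=\alpha$. Order reflection is similar: if $[\alpha]\leq[\beta]$ with $\alpha,\beta\leq\sigma$, then $h\alpha\leq\beta\leq\sigma$, the same argument gives $h$ fixing the vertices of $\alpha$, hence $h\alpha=\alpha\leq\beta$. Thus $\pi_\sigma$ is a poset isomorphism and the interval below $[\sigma]$ is Boolean.

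\textbf{Necessity.} Suppose $u$ and $gu$ are distinct vertices joined by an edge $e$ of $X$. Then $[e]$ has a single vertex $[u]$ below it, since $e$ has exactly the two vertices $u,gu$ and both map to $[u]$. But a $1$-cell of a simplicial poset must have exactly two distinct vertices below it. So $X/G$ is not simplicial. If one worries that $[e]$ might coincide with some other orbit, this does not matter: the interval below $[e]$ is $\{\emptyset,[u]\}$, which is not Boolean of rank $2$.

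The main obstacle is the sufficiency step. There one must use that distinct vertices of a single cell are pairwise adjacent. One must also check carefully that a face of $\sigma$ is determined by its vertices within $\partial\sigma$, which holds even when $X$ itself is not a simplicial complex. Finally, the statement that the resulting poset is the face poset of a regular CW-complex must be taken from~\cite{Bj84} rather than reproved.
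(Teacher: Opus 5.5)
The paper gives no proof of this proposition. It is quoted from \cite{DS26} and used as a black box, so there is no in-paper argument to compare yours with. On its own terms, your proof is correct and complete.

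The core is the local statement that $\alpha\mapsto[\alpha]$ restricts to an isomorphism $[\emptyset,\sigma]\to[\emptyset,[\sigma]]$. You correctly identify the two ingredients:
\begin{itemize}
\item Any two distinct vertices of a cell are adjacent, so goodness forces every $g$ with $\alpha, g\alpha\leq\sigma$ to fix the vertices of $\alpha$.
\item Inside the Boolean interval $[\emptyset,\sigma]$ a face is determined by its atoms. This is exactly what handles the possibility of several cells of $X$ with the same vertex set.
\end{itemize}
Order reflection then follows from injectivity, as you say. Bj\"orner's theorem, that a poset with $\hat 0$ and Boolean lower intervals is the face poset of a regular, hence simplicial, CW-complex, closes the sufficiency direction. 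The necessity direction is also right.

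Your argument is intrinsic to the poset and does not need the paper's remark identifying $X/G$ with the face poset of $M/G$. Bj\"orner's realization produces \emph{some} simplicial CW-complex with face poset $X/G$, which is all this proposition asks. Identifying that complex with $|X|/G$ is the separate content of Proposition~\ref{prop:good}, and your argument does not address it.

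Only minor tightenings are needed:
\begin{itemize}
\item[(i)] When $\sigma$ is itself an edge, the edge containing $v$ and $gv$ is $\sigma$. So say it lies in $[\emptyset,\sigma]$ rather than in $\partial\sigma$.
\item[(ii)] In the necessity part, justify that nothing else lies below $[e]$: if $[\alpha]<[e]$ then $g\alpha<e$, so $g\alpha\in\{\emptyset,u,gu\}$. Then say that the three-element chain $\{[\emptyset],[u],[e]\}$ is not Boolean of \emph{any} rank, since Boolean lattices have $2^k$ elements, not merely ``not of rank $2$''.
\item[(iii)] For full self-containment, record that the quotient relation is antisymmetric, which the paper takes for granted. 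If $g\alpha\leq\beta$ and $h\beta\leq\alpha$, then $hg\alpha\leq h\beta\leq\alpha$. Automorphisms preserve rank, so $hg\alpha=\alpha$, hence $h\beta=\alpha$ and $[\alpha]=[\beta]$.
\end{itemize}
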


\begin{proposition} \label{prop:good}
 Let $X$ be a simplicial cell complex, and let $G \leq \operatorname{Aut}(X)$. 
 If the action of $G$ on $X$ is good, then $\left | X/G \right | \cong |X|/G$. 
\end{proposition}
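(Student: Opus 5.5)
The plan is to compare the two CW-complexes $|X/G|$ and $|X|/G$ through their face posets, using Proposition \ref{prop:quotient} and the principle recalled above: two finite regular CW-complexes with isomorphic face posets are homeomorphic. Realize $X$ as the face poset of a simplicial CW-complex $M=|X|$. The group $G$ then acts on $M$ by cellular homeomorphisms: each automorphism permutes the closed cells, and we fix characteristic maps from standard simplices compatibly with the face identifications, so that on each cell the action is given by the affine map of simplices determined by the induced vertex bijection. The quotient $M/G$ inherits a CW structure whose cells are the $G$-orbits of cells of $M$, and its face poset is $X/G$, as noted in the text. By Proposition \ref{prop:quotient}, goodness makes $X/G$ a simplicial cell complex, so $|X/G|$ is a regular CW-complex with face poset $X/G$.

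The main step is to show that $M/G$ itself is a regular (indeed simplicial) CW-complex. Then both $|X/G|$ and $M/G=|X|/G$ are finite regular CW-complexes with face poset $X/G$, and the homeomorphism follows. Regularity of $M/G$ amounts to two facts: the quotient map $\pi: M\to M/G$ is injective on each closed cell $\sigma$, and the image $\pi(\sigma)$ is a closed ball.

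For injectivity, suppose $x,y\in\sigma$ with $gx=y$. Then $g$ maps the carrier cell $\tau\le\sigma$ of $x$ to the carrier cell of $y$, which is also a face of $\sigma$. Since the action is good, distinct vertices of $\sigma$ lie in distinct orbits, because any two vertices of $\sigma$ are adjacent. The vertices of $\tau$ and of $g\tau$ are vertices of $\sigma$ and are matched by $g$ within orbits, so $g$ fixes every vertex of $\tau$. It therefore fixes $\tau$ pointwise, by the affine description of the action, whence $x=y$. The same argument shows that no element of $G$ maps a cell to itself in a nontrivial way, which is exactly what rules out folding. Consequently $\pi|_\sigma$ is a continuous injection from a compact ball into the Hausdorff space $M/G$, hence an embedding, and its boundary is the union of images of proper faces.

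The obstacle I expect is a technical point: that $G$ acts genuinely on $M$, compatibly with the chosen characteristic maps, and that $M/G$ is Hausdorff. Since $G$ is finite and $M$ is a finite CW-complex, I would build the homeomorphisms cell by cell, by induction on dimension, using the barycentric (derived) model $X'$. In that model $G$ acts simplicially on $|X'|$, $|X|=|X'|$, and $|X'|/G$ is Hausdorff because the orbits of a finite group are closed.
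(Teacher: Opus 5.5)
The paper does not prove this proposition: it is quoted from \cite{DS26} along with Proposition~\ref{prop:quotient}, so there is no in-paper argument to compare with. Your proposal is a self-contained proof, and its strategy is sound. Goodness forces any $g\in G$ that moves a point of a closed cell $\sigma$ to another point of $\sigma$ to fix the carrier face of that point pointwise. Hence $\pi$ embeds every closed cell, so $M/G$ is a regular CW-complex with face poset $X/G$, and the uniqueness of finite regular CW-complexes with a given face poset gives $|X/G|\cong|X|/G$. Compared with the bare citation, this shows exactly where goodness enters: distinct vertices of a cell lie in distinct orbits. It also makes Proposition~\ref{prop:quotient} unnecessary. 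Your vertex argument already shows that $\tau\mapsto[\tau]$ is an order isomorphism from the faces of $\sigma$ onto the faces of $[\sigma]$. So $M/G$ is itself a simplicial CW-complex, $X/G$ is a simplicial cell complex, and $|X/G|=M/G$ by definition.

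A few steps need tightening.
\begin{enumerate}
\item You pass from ``$g$ fixes every vertex of $\tau$'' straight to ``$g$ fixes $\tau$ pointwise.'' In a simplicial poset a cell is not determined by its vertex set; $\Sigma/\langle\rho\rangle$ has many cells sharing the same vertices. You must first use that $\tau$ and $g\tau$ are both faces of the same $\sigma$, whose lower interval is Boolean. That gives $g\tau=\tau$ and $g$ fixing every face of $\tau$, and only then does the affine (or simplicial) description give $g|_{\tau}=\mathrm{id}$.
\item Do not rely on the sentence in the text that $M/G$ is a CW-complex whose cells are the orbits of cells. Without goodness this is false: the flip of a single edge sends the open edge onto a half-open arc. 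It is your injectivity argument that establishes this CW structure, and it should be presented that way.
\item Closed orbits only make points of $M/G$ closed. For Hausdorffness, use that $\pi$ is a closed map ($G$ is finite) and $M$ is compact Hausdorff, hence normal. Alternatively, separate two orbits directly by choosing $U\ni x$, $V\ni y$ with $gU\cap V=\emptyset$ for all $g\in G$.
\end{enumerate}

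You can also skip building compatible characteristic maps cell by cell. Work directly in $|X'|$, where $G$ acts simplicially and the closed cell of $\sigma$ is the order complex of the nonempty faces of $\sigma$. There, ``$g$ fixes every face of $\tau$'' literally means $g$ fixes every vertex of the subcomplex of $\tau$, hence fixes it pointwise. With these adjustments the argument is complete.
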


For $n\geq 1$, let $S^{\hspace{.2mm}2n+1} = \{(z_0, z_1, \dots, z_n)\in \mathbb{C}^{\hspace{.2mm}n+1} \, : \, |z_0|^2 + |z_1|^2+ \cdots + |z_n|^2 =1\}$ be the unit $(2n+1)$-sphere and let $q_1, \dots, q_n$ be integers relatively prime to an integer $p\geq 2$. Define $h: S^{\hspace{.2mm}2n+1}  \to S^{\hspace{.2mm}2n+1} $ by 
\begin{align}\label{eq:action}
h(z_0, z_1, \dots, z_n) & := (e^{2\pi i/p}z_0, e^{2\pi iq_1/p}z_1, \dots, e^{2\pi iq_n/p}z_n). 
\end{align}
Then $h$ is a homeomorphism of $S^{\hspace{.2mm}2n+1}$ of period $p$. Therefore, we get  a $\mathbb{Z}_p=\langle h\rangle$ action on $S^{\hspace{.2mm}2n+1}$. This action has no fixed points. The quotient space $L(p, q_1, \dots, q_n) := S^{\hspace{.2mm}2n+1}/ \langle h\rangle$ is a $(2n+1)$-dimensional smooth (and hence PL) manifold and is called a {\em generalized lens space}  (see \cite{Sp90}). For $p, q$ relitively prime, the 3-manifold $L(p,q)$ is called a {\em lens space}.

\section{Constructions of Crystallizations}\label{sec:construction}

Throughout this section, $n\geq 1$ and $p\geq 2$ are integers. Empty set is a cell of dimension $-1$ belongs to all simplicial cell complexes. We also identify, a simplicial complex with the corresponding geometric simplicial complex. 

Let $W =\{ w_{\ell} = e^{\ell\pi i/p} \, : \, 0\leq \ell\leq 2p-1\} \subseteq S^{\hspace{.2mm}1}\subseteq \mathbb{C}$ be the set of $2p$-th roots of unity. Let $U= \{w_{2k} \, :\, 0\leq k\leq p-1\}$ and $V= W\setminus U$. So, $U = \{e^{2k\pi i/p} \, : \, 0\leq k\leq p-1\}$ is the set of $p$-th roots of unity.  

For $0\leq j\leq n$, let $\mathbb{C}_j := \{(z_0, \dots, z_{j-1}, z_j, z_{j+1}, \dots, z_{n})\in \mathbb{C}^{n+1} \, : \, z_{k} = 0$ for $k\neq j\}$. Then $\mathbb{C}_j$ is an 1-dimensional subspace of $\mathbb{C}^{n+1}$ and $\mathbb{C}^{n+1} = \oplus_{j=0}^n\mathbb{C}_j$. Let $S_j^{\hspace{.2mm}1}$ be the unit circle in  $\mathbb{C}_j$. So, $S_j^{\hspace{.2mm}1} = \{(z_0, \dots, z_{j-1}, z_j, z_{j+1}, \dots$, $z_{n+1}) \, : \, |z_j|=1, z_k= 0$ for $k\neq j\}$. Let  
\begin{align*}
W_j &:= \{w_{j, \ell} := (z_0, \dots, z_{j}, \dots, z_{n})\in S_j^{\hspace{.2mm}1},   z_j = e^{\ell\pi i/p} \, : \, 0\leq \ell \leq 2p-1\}, \\
U_j &:= \{w_{j,2k} \, : \, 0\leq k\leq p-1\}, \mbox{ and } V_j  := W_j\setminus U_j. 
\end{align*}
Now, consider the simplicial complexes 
\begin{align}\label{eq:Sigma}
\Sigma_j & := W_j \cup \{\emptyset,  \{w_{j,2p-1}, w_{j,0}\}, \{w_{j,\ell}, w_{j,\ell+1}\} : 0\leq \ell \leq 2p-2\}, \mbox{ for } 0\leq j\leq n,  \mbox{ and} \nonumber \\
\Sigma & := \Sigma_0 \ast \Sigma_1\ast \cdots \ast \Sigma_n. 
\end{align}

Then $\Sigma_j$ is a $2p$-vertex triangulation of $S_j^{\hspace{.2mm}1}$, for $0\leq j\leq n$, and hence $\Sigma$ is a $2p(n+1)$-vertex triangulation of $S^{\hspace{.2mm}2n+1}$.  
In fact,  $\Sigma$ is the boundary complex of the simplicial $2(n+1)$-polytope $P$ whose vertex set is $W_0\cup\cdots\cup W_n$. If $L_{j,\ell} := w_{j,\ell}w_{j, \ell+1}$ is the straight line segment between the vertices $w_{j,\ell}$ and $w_{j, \ell+1}$ (additions in the suffices are modulo $2p$), then the $(2n+1)$-dimensional faces of $P$ are $L_{0,\ell_0}L_{1,\ell_1}\cdots L_{n,\ell_{n}}$ (join of $n+1$ geometric 1-simplices), $\ell_0, \dots, \ell_n\in \{0, \dots, 2p-1\}$. So, the boundary  $\partial P$ of $P$ is the union of these $(2p)^{n+1}$ geometric $(2p-1)$-simplices, and we take $|\Sigma| = \partial P$. Then 
\begin{align}\label{eq:Phi}
\Phi & :  |\Sigma| \to S^{\hspace{.2mm}2n+1}, \mbox{ given by } 
  w \mapsto w/\|w\|, 
 \end{align}
 is a homeomorphism. 

Consider the map $\rho: \cup_{j=0}^nW_j \to\cup_{j=0}^nW_j$ as 
\begin{align*}
\rho(v) := \left\{\begin{array}{lcl} 
e^{2\pi i/p}\times v & \mbox{ if } & v\in W_0,  \mbox{ and} \\
 e^{2\pi iq_j/p}\times v & \mbox{ if } & v\in W_j \mbox{ for } 1\leq j\leq n. 
 \end{array}
 \right.
 \end{align*}
For $\alpha = \{v_1, \dots, v_m\}\in \Sigma$, we define 
\begin{align}\label{action}
\rho(\alpha) := \{\rho(v_1), \dots, \rho(v_m)\}. 
\end{align}

\begin{lemma}\label{lem:action}
The map $\rho$ is an automorphism of the simplicial poset $\Sigma$ and $\langle\rho\rangle \cong \mathbb{Z}_p$.
\end{lemma}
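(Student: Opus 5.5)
The plan is to check, in order, that $\rho$ permutes the vertex set, that it carries each $\Sigma_j$ onto itself, that it preserves the join structure, and finally that it has order exactly $p$.

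\emph{Vertex bijection.} I would first show $\rho$ restricts to a bijection of each $W_j$. Multiplication by $e^{2\pi i q_j/p}=e^{2q_j\pi i/p}$ sends $w_{j,\ell}=e^{\ell\pi i/p}$ (in the $j$-th coordinate) to $w_{j,\ell+2q_j}$, indices taken mod $2p$; for $j=0$ set $q_0:=1$. Thus $\rho|_{W_j}$ is the index shift $\ell\mapsto \ell+2q_j \pmod{2p}$. This is a bijection of $W_j$, hence $\rho$ is a bijection of $\cup_j W_j$.

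\emph{Each $\Sigma_j$ is preserved.} The shift maps each edge $\{w_{j,\ell},w_{j,\ell+1}\}$ of the $2p$-cycle $\Sigma_j$ to $\{w_{j,\ell+2q_j},w_{j,\ell+2q_j+1}\}$, which is again an edge; the edge $\{w_{j,2p-1},w_{j,0}\}$ is the case $\ell=2p-1$ under the mod $2p$ convention. So $\rho$ maps $\Sigma_j$ into itself, and since it is a bijection on the finite set $\Sigma_j$, it is an automorphism of $\Sigma_j$. The same argument applies to $\rho^{-1}$, which is the shift by $-2q_j$.

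\emph{Join structure.} A simplex of $\Sigma$ is $\alpha=\alpha_0\sqcup\cdots\sqcup\alpha_n$ with $\alpha_j\in\Sigma_j$. Since $\rho$ preserves each $W_j$, we get $\rho(\alpha)=\rho(\alpha_0)\sqcup\cdots\sqcup\rho(\alpha_n)\in\Sigma$. Because $\rho$ is a bijection on vertices, definition (\ref{action}) is injective and preserves inclusion in both directions. Together with surjectivity (apply the same argument to $\rho^{-1}$), this makes $\rho$ a poset automorphism of $\Sigma$.

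\emph{Order of $\rho$.} On $W_j$, $\rho^m$ is the shift by $2mq_j$ mod $2p$. This shift is the identity iff $p\mid mq_j$, i.e.\ iff $p\mid m$, since $\gcd(q_j,p)=1$. Hence $\rho^p=\mathrm{id}$, while already on $W_0$ we have $\rho^m\ne\mathrm{id}$ for $0<m<p$. So $\rho$ has order exactly $p$ and $\langle\rho\rangle\cong\mathbb{Z}_p$.

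The only point needing care is the mod-$2p$ bookkeeping of indices, namely that $\rho$ keeps even indices even and odd indices odd and preserves cyclic adjacency. There is no real obstacle.
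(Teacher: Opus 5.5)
Your proposal is correct and follows the same route as the paper: you decompose a simplex of $\Sigma$ into its join components in the $\Sigma_j$, and you note that rotation by a $p$-th root of unity sends edges of each $2p$-cycle $\Sigma_j$ to edges. You also make explicit what the paper leaves as ``clearly'': that $\rho$ acts on $W_j$ as the index shift $\ell\mapsto\ell+2q_j \pmod{2p}$, so it is a vertex bijection and hence a two-sided order-preserving map, and that its order is exactly $p$, which is already forced on $W_0$ where $q_0=1$.
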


\begin{proof}
To show that $\rho\in {\rm Aut}(\Sigma)$, it is sufficient to show that $\alpha\in \Sigma$ implies $\rho(\alpha)\in \Sigma$. From the definition of join, we can write  $\alpha = \alpha_1\sqcup\cdots \sqcup \alpha_r$, where $\alpha_s\in \Sigma_{i_s}$, $s=1, \dots, r$ and $i_1, \dots, i_r$ are distinct. Since each $\Sigma_j$ is 1-dimensional, it follows that each $\alpha_r$ is a vertex or an edge. Now for any $j$, any edge $\{w_{j, k}, w_{j, k+1}\}$ of $\Sigma_j$, and any integer $t$, $\{e^{2\pi it/p}w_{j, k}, e^{2\pi it/p}w_{j, k+1}\}$ is again an edge of $\Sigma_j$. Therefore, $\rho(\alpha_{s})\in \Sigma_{i_s}$. These imply, $\rho(\alpha) = \rho(\alpha_1)\sqcup\cdots\sqcup\rho(\alpha_r) \in \Sigma$. Thus $\rho\in {\rm Aut}(\Sigma)$. Clearly, the order of $\rho$ is $p$. This proves the lemma. 
\end{proof}

Let $w\in |\Sigma|$. Suppose $w$ is in the relative interior of a simplex $\alpha= \{w_1, \dots, w_m\}$ of $\Sigma$. Let $w = t_1w_1+\cdots + t_mw_m$, for some $t_1, \dots, t_m\in (0, 1)$ with $t_1+\cdots+t_m=1$. Then $t_1\rho(w_1)+ \cdots + t_m\rho(w_m)$ is in the relative interior of $\rho(\alpha)$. Then 
\begin{align}\label{eq:|rho|}
|\rho|(w) := t_1\rho(w_1)+ \cdots + t_m\rho(w_m)  
\end{align}
gives a (pl) homeomorphism $|\rho| : |\Sigma| \to |\Sigma|$.

\begin{lemma}\label{lem:equality}
Let $h: S^{\hspace{.2mm}2n+1}  \to S^{\hspace{.2mm}2n+1} $,  $\Phi: |\Sigma| \to S^{\hspace{.2mm}2n+1}$ and $|\rho|: |\Sigma| \to |\Sigma|$ be as in \eqref{eq:action},  \eqref{eq:Phi} and \eqref{eq:|rho|} respectively. Then $\Phi\circ|\rho|\circ\Phi^{-1}\equiv h$.
\end{lemma}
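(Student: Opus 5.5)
The plan is to show the equivalent identity $\Phi\circ|\rho| = h\circ\Phi$ on $|\Sigma|$, and to get it from two observations. First, $|\rho|$ is the restriction to $|\Sigma|$ of a complex-linear map $H$ of $\mathbb{C}^{n+1}$. Second, this linear map is unitary, so it commutes with the radial projection $w\mapsto w/\|w\|$.

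\textbf{Step 1: the linear extension.} Let $H:\mathbb{C}^{n+1}\to\mathbb{C}^{n+1}$ be
\[
H(z_0,\dots,z_n)=(e^{2\pi i/p}z_0,\,e^{2\pi iq_1/p}z_1,\dots,e^{2\pi iq_n/p}z_n).
\]
Its restriction to $S^{2n+1}$ is exactly $h$. By the definition of $\rho$, for every vertex $v\in W_j$ we have $\rho(v)=H(v)$. This holds because $v$ lies in $\mathbb{C}_j$, and there $H$ acts as multiplication by $e^{2\pi i q_j/p}$ (with $q_0:=1$).

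\textbf{Step 2: $|\rho|=H$ on $|\Sigma|$.} Take $w\in|\Sigma|$ in the relative interior of a simplex $\alpha=\{w_1,\dots,w_m\}$, and write $w=\sum t_kw_k$ with $t_k>0$ and $\sum t_k=1$. Since $H$ is $\mathbb{R}$-linear,
\[
|\rho|(w)=\sum t_k\rho(w_k)=\sum t_kH(w_k)=H\Big(\sum t_kw_k\Big)=H(w).
\]
So $|\rho|=H|_{|\Sigma|}$. One point needs checking here: $w\neq 0$, so that $\Phi$ is defined at $w$ and at $H(w)$. This holds because $|\Sigma|=\partial P$ and $0$ lies in the interior of $P$. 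Lemma~\ref{lem:action} guarantees that $\rho(\alpha)$ is again a simplex of $\Sigma$, so $H$ maps $|\Sigma|$ into itself.

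\textbf{Step 3: unitarity and conclusion.} $H$ is diagonal with unit-modulus entries, so it is unitary and $\|H(w)\|=\|w\|$. Therefore
\[
\Phi(|\rho|(w))=\frac{H(w)}{\|H(w)\|}=H\Big(\frac{w}{\|w\|}\Big)=h(\Phi(w)).
\]
Since $\Phi$ is a homeomorphism, this gives $\Phi\circ|\rho|\circ\Phi^{-1}=h$.

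The only step that needs care is Step 2. One has to make sure the barycentric formula \eqref{eq:|rho|} really coincides with the linear map $H$ on each simplex, and this is where linearity of $H$ together with its agreement with $\rho$ on vertices is used. Everything else is immediate.
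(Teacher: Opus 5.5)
Your proof is correct and is essentially the paper's argument. The paper's Claim~1 is the coordinate form of your Step~2: $|\rho|$ multiplies the $\mathbb{C}_j$-component of $w$ by $e^{2\pi i q_j/p}$, with $q_0=1$. Its orthogonality computation $\| |\rho|(w) \| = \|w\|$ is your unitarity of $H$. Writing $|\rho|$ as the restriction of the single linear map $H$ just removes the join-decomposition bookkeeping (the $a_s$, $x_s$, $b_k$) that the paper carries along.
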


\begin{proof}
Let $w\in |\Sigma|$. Suppose, $w$ is in the relative interior of $\alpha$. Let $\alpha = \alpha_1 \sqcup\cdots \sqcup \alpha_r$, where $\alpha_s$ is either a vertex or an edge of $\Sigma_{i_s}$, $s=1, \dots, r$ and $i_1, \dots, i_r$ are distinct. Then $w= a_1x_1+\cdots + a_rx_r$, where $x_1, \dots, x_r$ are in the relative interior of $\alpha_1, \dots, \alpha_r$ respectively and $a_1, \dots, a_r\in (0, 1)$ with $a_1+\cdots+a_r=1$.  

\medskip 

\noindent {\bf Claim 1.} $|\rho|(w) = a_1e^{2\pi iq_{i_1}/p} x_1+ \cdots + a_re^{2\pi iq_{i_r}/p}x_r$, where $q_0=1$. 

\medskip

Without loss, let us assume that $\alpha_1 = \{w_1, w_2\}, \dots, \alpha_{\ell} = \{w_{2\ell - 1}, w_{2\ell}\}$, $\alpha_{\ell+1}= w_{2\ell+1}, \dots, \alpha_{s} = w_{\ell+s}$. Let $x_k = b_{2k-1}w_{2k-1}+ b_{2k}w_{2k}$, where $b_{2k-1}, b_{2k}>0$ and $b_{2k-1}+ b_{2k} =1$ for $1\leq k\leq\ell$. Then $w= a_1b_1w_1 + a_1b_2w_2 + \cdots + a_{\ell}b_{2\ell}w_{2\ell}+ a_{\ell+1}w_{2\ell+1} + \cdots+ a_sw_{\ell+s}$ and $a_1b_1 + \cdots + a_{\ell}b_{2\ell}+ a_{\ell+1} + \cdots+ a_r = a_1(b_1+b_2) + \cdots + a_{\ell}(b_{2\ell-1}+b_{2\ell}) + a_{\ell+1} + \cdots+ a_r = a_1+a_2+\cdots +a_r =1$.  Then, by the definition in \eqref{eq:|rho|}, 
\begin{align*}
|\rho| (w)  &= a_1b_1\rho(w_1) + a_1b_2\rho(w_2) + \cdots + a_{\ell}b_{2\ell}\rho(w_{2\ell}) + a_{\ell+1}\rho(w_{2\ell+1}) + \cdots+ a_r\rho(w_{\ell+r}) \\
& = a_1e^{2\pi iq_{i_1}/p}(b_1w_1 + b_2w_2) + \cdots +  a_{\ell}e^{2\pi iq_{i_{\ell}}/p}(b_{2\ell-1}w_{2\ell-1} +b_{2\ell}w_{2\ell}) \\
& ~~~~ \qquad + a_{\ell+1}e^{2\pi iq_{i_{\ell+1}}/p}w_{2\ell+1}  + \cdots+ a_re^{2\pi iq_{i_{r}}/p}w_{\ell+r} \\
& = a_1e^{2\pi iq_{i_1}/p} x_1+ \cdots + a_re^{2\pi iq_{i_r}/p}x_r. 
\end{align*}
This proves Claim 1. 

\smallskip

Now, $x_s\in \mathbb{C}_{i_s}$, for $s=1,\dots, r$ and $i_1, \dots, i_r$ are distinct. Therefore, $\{a_1x_1, \dots, a_sx_r\}$ and $\{a_1e^{2\pi iq_{i_1}/p} x_1, \dots, a_se^{2\pi iq_{i_r}/p}x_r\}$ are two sets of orthogonal vectors in $\mathbb{C}^{n + 1}$. This implies that $\||\rho|(w)\|^2 = \|a_1e^{2\pi iq_{i_1}/p} x_1\|^2 + \cdots + \|a_re^{2\pi iq_{i_r}/p}x_r\|^2 = \|a_1x_1\|^2 + \cdots + \|a_rx_r\|^2=\||w\|^2$. For $1\leq s\leq r$, let $a_sx_s/\|w\|= c_se_{i_s}$, where $e_0=(1, 0, \dots, 0), e_1=(0, 1, 0, \dots, 0)$, $e_n=(0, \dots, 0, 1)$ are the standard unit vectors in $\mathbb{C}^{n+1}$. Then 
\begin{align*}
(\Phi\circ|\rho|)(w) & = \frac{a_1|\rho|(x_1) + \cdots + a_r|\rho|(x_r)}{\||q|(w)\|} = \frac{a_1e^{2\pi iq_{i_1}/p} x_1+ \cdots + a_re^{2\pi iq_{i_r}/p}x_r}{\|w\|}  \mbox{ and} \\
(h\circ\Phi)(w)& = h(c_1e_{i_1}+\cdots+c_se_{i_s}) = e^{2\pi i q_{i_1}/p}c_1e_{i_1}+\cdots+ e^{2\pi i q_{i_r}/p}c_re_{i_r}. 
\end{align*}
Therefore, $(\Phi\circ|\rho|)(w)= (h\circ\Phi)(w)$ for all $w\in |\Phi|$. Hence $\Phi\circ|\rho| \cong h\circ\Phi$. This proves the lemma. 
\end{proof}

\begin{theorem}
For integers $n\geq 1$ and $p\geq 2$, let $q_1, \dots, q_n$ be integers relatively prime to  $p$. Let $\Sigma$ and $\rho$ be as in \eqref{eq:Sigma} and \eqref{action} respectively. Then  the quotient poset $\Sigma/\langle\rho\rangle$ is a crystallization of the generalised lens space $L(p, q_1, \dots, q_n)$. 
\end{theorem}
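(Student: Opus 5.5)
The plan is to verify three things: the action of $G:=\langle\rho\rangle$ on $\Sigma$ is good, $|\Sigma/G|$ is homeomorphic to $L(p,q_1,\dots,q_n)$ (and PL), and $\Sigma/G$ has exactly $\dim+1 = 2n+2$ vertices.

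\textbf{Orbits.} First I compute the vertex orbits. On $W_j$, $\rho$ is multiplication by $e^{2\pi i q_j/p}$ (with $q_0=1$). This sends $w_{j,\ell}$ to $w_{j,\ell+2q_j}$, with indices mod $2p$. Since $\gcd(q_j,p)=1$, the residue $2q_j$ generates the subgroup $2\mathbb{Z}_{2p}$. Hence the orbits of $G$ on $W_j$ are exactly $U_j$ (even indices) and $V_j$ (odd indices), each of size $p$, and vertices in different $W_j$ lie in different orbits. So $\Sigma/G$ has exactly $2(n+1)=2n+2$ vertices, which is $d+1$ for $d=2n+1$.

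\textbf{Goodness.} Two vertices of $\Sigma$ are adjacent in exactly two situations:
\begin{itemize}
\item they lie in different $W_j$'s, by the join structure, and then they lie in different orbits;
\item they are $w_{j,\ell}, w_{j,\ell+1}$ in the same $\Sigma_j$, and then their indices have different parity, because $2p$ is even so parity is well defined mod $2p$.
\end{itemize}
So no two vertices of one orbit are adjacent, and the action is good. By Proposition \ref{prop:quotient}, $\Sigma/G$ is a simplicial cell complex of dimension $2n+1$.

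\textbf{Topology.} By Proposition \ref{prop:good}, $|\Sigma/G|\cong|\Sigma|/\langle|\rho|\rangle$. By Lemma \ref{lem:equality}, $\Phi$ conjugates $|\rho|$ to $h$, so $\Phi$ is a $\mathbb{Z}_p$-equivariant homeomorphism. It therefore descends to a homeomorphism $|\Sigma|/\langle|\rho|\rangle\to S^{2n+1}/\langle h\rangle = L(p,q_1,\dots,q_n)$.

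\textbf{PL structure.} I expect this step to be the main obstacle. The plan is to pass to derived subdivisions.
\begin{itemize}
\item $\Sigma$ is the boundary complex of a simplicial polytope, so $\Sigma'$ is a combinatorial $(2n+1)$-sphere.
\item $G$ acts on $\Sigma'$ simplicially and freely on $|\Sigma'|$, since $h$ has no fixed points.
\item In $\Sigma'$, vertices are cells of $\Sigma$ and edges are comparable pairs. Two comparable cells in the same orbit would have the same dimension and hence be equal. So $G$ acts well on $\Sigma'$ as well, and one checks $(\Sigma/G)'\cong\Sigma'/G$ as posets.
\item A free good action makes $|\Sigma'|\to|\Sigma'/G|$ a simplicial covering. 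Each vertex link in $\Sigma'/G$ is then isomorphic to the corresponding vertex link in $\Sigma'$, which is a PL sphere.
\end{itemize}
Hence $(\Sigma/G)'$ is a combinatorial manifold, so $\Sigma/G$ induces a PL structure. Together with the vertex count, this shows $\Sigma/G$ is a crystallization of $L(p,q_1,\dots,q_n)$.
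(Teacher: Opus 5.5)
Your proposal is correct and follows the paper's proof essentially step for step. Both arguments run through goodness of the action via the orbits $U_j, V_j$, then Propositions \ref{prop:quotient} and \ref{prop:good} together with Lemma \ref{lem:equality} for the homeomorphism, then the simplicial covering $\Sigma'\to(\Sigma/\langle\rho\rangle)'$ for the PL structure, and finally the count of $2n+2$ vertex orbits. You also supply details the paper only asserts: why the orbits are exactly $U_j$ and $V_j$ (because $\gcd(2q_j,2p)=2$), and why the map at the level of derived subdivisions is a genuine simplicial covering that preserves vertex links.
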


\begin{proof}
Here  $\Sigma$ a simplicial cell complex and, by Lemma \ref{lem:action},  $\langle\rho \rangle \leq {\rm Aut}(\Sigma)$. 

\medskip 

\noindent {\bf Claim 2.} The action of $\langle\rho \rangle$ on the simplicial cell complex $\Sigma$ is good. 

\medskip

Observe that the vertices of $\Sigma$ form $2(n+1)$ $\langle\rho \rangle$-orbits, namely, $U_0, V_0$, $U_1$, $V_1$, $\dots, U_n, V_n$.  Now, no two vertices of $U_j$ (respectively, $V_j$) are adjacent in $\Sigma_j$ for $0\leq j\leq n$. Thus, from the definition of join, no two elements of any $\langle\rho \rangle$-orbit are adjacent in $\Sigma$. Therefore,  the action of $\langle\rho \rangle$ on $\Sigma$ is good. This proves Claim 2. 
 
 \medskip
 
By Proposition \ref{prop:quotient} and Claim 2, $\Sigma/\langle\rho \rangle$ is a simplicial cell complex. Then, by Proposition \ref{prop:good} , $|\Sigma/\langle\rho\rangle| \cong |\Sigma|/\langle|\rho|\rangle \cong S^{\hspace{.2mm}2n+1}/\langle \Phi\circ|\rho|\circ\Phi^{-1}\rangle$.  Now, by Lemma \ref{lem:equality}, $S^{\hspace{.2mm}2n+1}/\langle \Phi\circ|\rho|\circ\Phi^{-1}\rangle = S^{\hspace{.2mm}2n+1}/\langle h\rangle = L(p, q_1, \dots, q_n)$. Therefore, $|\Sigma/\langle\rho\rangle| \cong L(p, q_1, \dots, q_n)$. Thus, $\Sigma/\langle\rho \rangle$ is a simplicial cell structure of  $L(p, q_1, \dots, q_n)$. 

Let $\eta : \Sigma \to \Sigma/\langle\rho \rangle$ be the natural projection. Then $\eta$ is a $p$-fold cover and images of cells are cells. Let $\Sigma^{\prime}$  and $(\Sigma/\langle\rho \rangle)^{\prime}$ be the derived subdivisions of $\Sigma$  and $\Sigma/\langle\rho \rangle$ respectively. Then $\eta$ gives a $p$-fold simplicial covering $\tilde{\eta} : \Sigma^{\prime} \to (\Sigma/\langle\rho \rangle)^{\prime}$. Since each $\Sigma_j$ is triangulation of $S^{\hspace{.2mm}1}$, each $\Sigma_j$ is a combinatorial manifold and hence $\Sigma= \Sigma_0\ast \cdots\ast \Sigma_n$ is a combinatorial manifold. This implies that $\Sigma^{\prime}$ is a combinatorial manifold. Since $\tilde{\eta}: \Sigma^{\prime} \to (\Sigma/\langle\rho \rangle)^{\prime}$ is a simplicial covering, $(\Sigma/\langle\rho \rangle)^{\prime}$ is a combinatorial manifold. Therefore,  $ |\Sigma/\langle\rho \rangle| = |(\Sigma/\langle\rho \rangle)^{\prime}|$ is a PL manifold. 

Now, the vertices  of $\Sigma/\langle\rho \rangle$ are the $\langle\rho \rangle$-orbits of vertices of $\Sigma$ and hence the vertices are $U_0, V_0, U_1, V_1, \dots, U_n, V_n$. So, the number of vertices in the $(2n+1)$-dimensional simplicial cell complex $\Sigma/\langle\rho \rangle$ is $2(n+1)$. This implies that $\Sigma/\langle\rho \rangle$ is a crystallization of $L(p, q_1, \dots, q_n)$. This complete the proof. 
\end{proof}

%\subsection*{Acknowledgements}
%\newpage
%\section*{Statements and Declarations}
%The authors state that there is no conflict of interest.

{\small

\end{document}